\newtheorem{theorem}{Theorem}[section]
\newtheorem{lemma}[theorem]{Lemma}
\theoremstyle{definition}
\newtheorem{remark}[theorem]{Remark}
\numberwithin{equation}{section}
\newcommand{\Q}{{\mathbb {Q}}}
\newcommand{\R}{{\mathbb{R}}}
\newcommand{\Z}{{\mathbb{Z}}}
\newcommand{\N}{{\mathbb{N}}}
\newcommand{\dist}{{\rm dist}}
\newenvironment{customthm}[1]
  {\innercustomthm}
  {\endinnercustomthm}
\newcommand\da{Diophantine approximation}
\newcommand {\ignore}[1] {}
\newcommand\ssm{\smallsetminus}
\newtheorem*{remark*}{Remark}
\begin{document}


\baselineskip=17pt


\title{Simultaneous Diophantine approximation:  
sums of squares and homogeneous polynomials}

\author{Dmitry Kleinbock\\
Brandeis University\\ 
Waltham MA, USA 02454-9110\\
E-mail: kleinboc@brandeis.edu
\and 
Nikolay Moshchevitin\\
Moscow State University\\ 
Lenlnskie Gory 1, Moscow, Russia,   1119991\\
and Astrakhan State University\\
Tatishcheva 20a\\
Astrakhan, Russia, 414056\\
E-mail: moshchevitin@gmail.com}

\date{}

\maketitle


\renewcommand{\thefootnote}{}

\footnote{2010 \emph{Mathematics Subject Classification}: Primary 11J13; Secondary 11J54.}

\footnote{\emph{Key words and phrases}: simultaneous approximation, Diophantine exponents, intrinsic approximation, simplex lemma.}

\renewcommand{\thefootnote}{\arabic{footnote}}
\setcounter{footnote}{0}


\begin{abstract}
 Let {$f
$ be a homogeneous polynomial with rational coefficients in $d$ variables}.
We prove several results concerning uniform simultaneous approximation to {points on the graph of $f$, as well as   on the hypersurface  $\{f(x_1,\dots,x_d) = 1\}$.
The results are first stated for the case $f(x_1,\dots,x_d) = x_1^2+\dots+x_d^2,$ which  is of  particular interest.}
 \end{abstract}

\section{Diophantine exponents}
 Let $\Theta = (\theta_1,\dots,\theta_m)$ be a collection of real numbers. 
 The {\sl ordinary Diophantine exponent} $\omega = \omega(\Theta)$ 
 for simultaneous  {rational} approximation to $\Theta$ 
 is defined as the supremum over all real $\gamma$ such that 
 the inequality 
 $$
 \max_{1\le j \le m} |q\theta_j - a_j| < q^{-\gamma}
 $$
 has infinitely many solutions  in integer points $(q,a_1,\dots,a_m)\in \mathbb{Z}^{m+1}$
 with $ q >0$.
 
 The {\sl uniform 
 Diophantine exponent} $\hat{\omega} = \hat{\omega}(\Theta)$ 
 for simultaneous approximation to $\Theta$ 
 is defined as the supremum over all real $\gamma$ such that 
 the system of inequalities 
 $$
 \max_{1\le j \le m} |q\theta_j - a_j| < Q^{-\gamma},\,\,\,\, 1\le q \le Q
 $$
 has a solution $(q,a_1,\dots,a_m)\in \mathbb{Z}^{m+1}$ for every large enough real $Q$.
 It {immediately follows from Minkowski's convex body  theorem that $\hat{\omega}(\Theta)\ge \frac{1}{m}$ for any $\Theta\in\R^m$. Furthermore, let us say that $\Theta$ is
{\sl totally irrational}
if $1, \theta_1,\dots,\theta_m$
are linearly
independent over
$\Z$. For such $\Theta$} it
 was first observed by  Jarn\'ik   \cite[Satz 9]{JJ} that 
 $$
 \hat{\omega}(\Theta)\le 1.
 $$
 (See also
 \cite[Theorem 17]{MK},  {as well as}
  \cite[Theorem 5.2]{W} for a proof based on homogeneous dynamics.)
 In particular for $m=1$  one has 
 \begin{equation}\label{odin}
 \hat{\omega}(\theta )=1\,\, {\text{ for all } \theta \in \mathbb{R}\ssm\Q}.
 \end{equation}
{On the other hand,} for $m\ge 2$ {it is known that}  for arbitrary $ \lambda$ from the interval $\left[\frac{1}{m},1\right]$ there exists ${\Theta\in\R^m}$ with
 $\hat{\omega}(\Theta) = \lambda$.
 
 
 Moreover
 it is clear from the definition that 
 $$ 
 {\omega}(\Theta) \ge \hat{\omega}(\Theta) 
$$
{for any $\Theta \in\R^m$.} Here we should mention that in \cite{J}  Jarn\'ik gave an improvement of this bound for the collection {of} $\Theta$ 
{such that} there are at least two numbers $\theta_i, \theta_j$ linearly independent over $\mathbb{Z}$ together with $1$.
In this case he proved the inequality
$$
\frac{\omega}{\hat{\omega}}\ge \frac{\hat{\omega}}{1-\hat{\omega}}.
$$
 This inequality is optimal for $m=2$.
 For arbitrary $m$ the optimal inequality
 was obtained recently 
  by
  Marnat and Moshchevitin \cite{MM}.

\begin{customthm}{A}\label{A} {\cite[Theorem 1]{MM}}
 Let {$\Theta\in\R^m$  
   { be totally irrational}, and let  ${\omega} = {\omega}(\Theta)$ and  $\hat{\omega} = \hat{\omega}(\Theta)$. Denote by   $G_{m}$ the unique positive root of the equation
   \begin{equation}\label{root}
   x^{m-1} = \frac{\hat{\omega}}{1-\hat{\omega}} (x^{m-2} + x^{m-3} + \dots+ x+1).
\end{equation}
   Then} 
 one has
   \begin{equation*}\label{pp}
  \frac{\omega}{\hat{\omega}}\ge {G_{m}}.
   \end{equation*}
\end{customthm}

   In the present paper we study the bounds for the uniform exponent  $\hat{\omega}$ for special collections of numbers.
   Theorem \ref{A} will be {an} important ingredient of our proofs.
 
\section{Approximation to several real numbers and sums of their squares}\label{appr}

 In \cite{DS} Davenport and Schmidt proved the following theorem:

\begin{customthm}{B}\label{B}  {\cite[Theorem 1a]{DS}} Suppose that  $\xi\in \R$ is neither a rational number  nor a quadratic irrationality.
Then the uniform Diophantine exponent $\hat{\omega} = \hat{\omega}(\Xi)$ {of} the vector $ \Xi = (\xi, \xi^2)\in \mathbb{R}^2$ 
 satisfies the inequality
$$
\hat{\omega} \le \frac{\sqrt{5}-1}{2}.
$$
\end{customthm}
 
 Here we should note that $\frac{\sqrt{5}-1}{2}$
 is the unique positive root of the equation
 $$
 x^2 +x =1.
 $$
  It is known due to Roy \cite{R} that the bound of Theorem \ref{B} is optimal.
  Davenport and Schmidt  proved a more general result  {\cite[Theorem 2a]{DS}}  
  {involving} successive powers $ \xi,\xi^2, \dots,\xi^m$.
  However in the present paper we deal with  another generalization.

 In the sequel we will consider 
$m = d$ or $m = d+1$ numbers.
 Namely, 
{take  $$\pmb{\xi} = (\xi_1,\dots,\xi_d)\in\mathbb{R}^d$$ and introduce  the vector} 
\begin{equation}\label{oooo}
  \Xi = (\xi_1,\dots,\xi_d, \xi_1^2+\dots+\xi_d^2) \in\mathbb{R}^{d+1}.
   \end{equation}
  {Also let} $H_d$ be the unique positive root of the equation
  \begin{equation}\label{po}
x^{d+1} + x^{d} + \dots+ x  = 1.
\end{equation}
  {Note that}
  %
$ \frac{1}{2}< H_d < 1$, and $ H_d \to \frac{1}{2}$  monotonically when $d \to \infty$.

  \medskip
  In the present paper we prove {the following  two} theorems dealing with sums of squares.
 
\begin{theorem}\label{1} Let $ d\ge 1$ be an integer.
Suppose that {$\Xi $ {as in \eqref{oooo}} is totally irrational.}
Then the {uniform} Diophantine exponent 
{of $\Xi $}
satisfies the inequality
\begin{equation*}\label{ff}
\hat{\omega} ({\Xi})\le H_d.
\end{equation*}
\end{theorem}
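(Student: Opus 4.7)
The plan is to combine the algebraic identity coming from $f(x_1,\dots,x_d)=x_1^2+\cdots+x_d^2$ with Theorem~\ref{A}. Writing $\eta\df\xi_1^2+\cdots+\xi_d^2$, for any integer vector $\vv=(q,a_1,\dots,a_d,a_{d+1})\in\Z^{d+2}$ the integer
\[
F(\vv)\df q\,a_{d+1} - a_1^2 - \cdots - a_d^2
\]
vanishes on the ray $t\mapsto t(1,\xi_1,\dots,\xi_d,\eta)$, so whenever $|q\xi_i-a_i|\le\mu$ for $1\le i\le d$ and $|q\eta-a_{d+1}|\le\mu$, a direct expansion yields $|F(\vv)|\le C\,q\,\mu$ for a constant $C=C(\pmb\xi)$. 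The associated integer-valued symmetric bilinear form
\[
B(\vu,\vv)\df u_0 v_{d+1}+v_0 u_{d+1}-2\sum_{i=1}^d u_i v_i
\]
satisfies an analogous bound $|B(\vu,\vv)|\le C'\bigl(u_0\,\mu(\vv)+v_0\,\mu(\vu)\bigr)$, where $\mu(\cdot)$ is the corresponding approximation error; moreover $F$ has Lorentzian signature $(1,d+1)$, so its maximal totally isotropic subspaces are one-dimensional.

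Set $\hat\omega\df\hat\omega(\Xi)$ and suppose for contradiction that $\hat\omega>H_d$. Multiplying \eqref{po} through by $H_d/(H_d-1)$ shows that $H_d$ is characterised by $H_d^{\,d+2}=2H_d-1$, which is precisely the condition under which $G=1/H_d$ solves \eqref{root} with $\hat\omega=H_d$ and $m=d+1$. Since $G_{d+1}$ is an increasing function of $\hat\omega$, the strict inequality $\hat\omega>H_d$ gives $G_{d+1}\,\hat\omega>1$, so Theorem~\ref{A} (applicable by total irrationality of $\Xi$) yields $\omega\df\omega(\Xi)>1$.

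Choosing a sequence of best approximations $\vv_k$ realising $\omega$, with denominators $q_k\nearrow\infty$ and errors $\mu_k\le q_k^{-\omega+o(1)}$, one obtains $|F(\vv_k)|\le C\,q_k^{1-\omega+o(1)}\to 0$, forcing $F(\vv_k)=0$ for all sufficiently large $k$; hence infinitely many $\vv_k$ lie on the rational cone $\{F=0\}$. My plan for finishing is to look at a consecutive block of these approximations and combine integrality of $B(\vv_k,\vv_\ell)$ with the Lorentzian property: two linearly independent isotropic vectors cannot be $B$-orthogonal, so vanishing of the pairings $B(\vv_k,\vv_\ell)$ on the block would force a linear dependence and, via the simplex lemma, a nontrivial $\Q$-linear relation on $(1,\xi_1,\dots,\xi_d,\eta)$, contradicting total irrationality of $\Xi$. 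The main obstacle is the bilinear step: while the single-vector vanishing $F(\vv_k)=0$ follows directly from $\omega>1$, vanishing of $B(\vv_k,\vv_\ell)$ requires simultaneous smallness of $q_k\mu_\ell+q_\ell\mu_k$, and one must calibrate the gaps between consecutive best approximations so that the threshold $\hat\omega=H_d$---the root of $x^{d+1}+x^d+\cdots+x=1$---emerges precisely at the borderline.
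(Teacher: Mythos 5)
Your first half is sound and matches the paper's ``Case 2'': the identity $G_{d+1}(H_d)\cdot H_d=1$ (equivalently $H_d^{d+2}=2H_d-1$) is exactly how the threshold \eqref{po} emerges from Theorem~\ref{A} with $m=d+1$, and the deduction that $\hat\omega(\Xi)>H_d$ forces $\omega(\Xi)>1$, whence integrality forces $F(\vv_k)=0$ for infinitely many good approximation vectors, is correct. The gap is in the remaining (on-cone) case, and it is genuine. Your bilinear step cannot be made to work: to get $B(\vv_k,\vv_\ell)=0$ you need $q_\ell\mu_k<c$, i.e.\ $q_\ell\lesssim 1/\mu_k$; but since $\hat\omega\le 1$ always, even the \emph{immediately following} denominator can be of size $1/\mu_k$ or larger (the relation \eqref{ww} gives $\mu_k\le q_{k+1}^{-\gamma}$ only with $\gamma<1$, so $q_{k+1}\mu_k\le q_{k+1}^{1-\gamma}$ is not small), and for a sparse subsequence realising $\omega$ there is no control on the gaps at all. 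No calibration fixes this; the pairing of two isotropic vectors is governed by exactly the quantity that refuses to tend to zero.

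What the paper does instead in this case (Section~\ref{t4}, Cases 1.1--1.2, specialised to $s=2$) is apply the one-sided simplex lemma, Lemma~\ref{l1}{\bf (i)}: if $f(\pmb{\alpha}_\nu)=A_\nu/q_\nu$ (i.e.\ $F(\vv_\nu)=0$) and $\pmb{\alpha}_\nu\ne\pmb{\alpha}_{\nu-1}$, then $\pmb{|}\pmb{\alpha}_\nu-\pmb{\alpha}_{\nu-1}\pmb{|}^2\ge (DKq_\nu q_{\nu-1}^2)^{-1}$ --- this needs the cone condition for only \emph{one} of the two vectors, so it can be paired with the immediate predecessor, for which \eqref{ww} gives $\pmb{|}\pmb{\alpha}_\nu-\pmb{\alpha}_{\nu-1}\pmb{|}\le 2\sqrt d\,\zeta_{\nu-1}/q_{\nu-1}\le 2\sqrt d/(q_{\nu-1}q_\nu^{\gamma})$; comparing exponents yields $\hat\omega\le 1/2<H_d$, the desired contradiction. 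You would also have to treat the degenerate subcase $\pmb{\alpha}_\nu=\pmb{\alpha}_{\nu-1}$, which can occur because your vectors live in $\Z^{d+2}$ while the simplex lemma compares the truncated rational points in $\Q^d$; the paper handles it by a separate g.c.d.\ argument (Lemma~\ref{l2}). Without a substitute for these steps the argument does not close.
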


{Note that} {in}  the case $d = 1$ 
 Theorem \ref{1}
coincides with {Theorem \ref{B}}. 
 The {next} theorem {can} be proved by {a} similar argument.

\begin{theorem}\label{2} Let $d\ge 2$.
Suppose that  
{$\pmb{\xi} = (\xi_1,\dots,\xi_d)$ is totally irrational} and
 \begin{equation}\label{0}
 \xi_1^2+\dots+\xi_d^2= 1.
\end{equation}
Then the  {uniform} Diophantine exponent 
{of} $\pmb{\xi} $
satisfies the inequality
\begin{equation*}\label{f}
\hat{\omega} (\pmb{\xi})\le H_{d-1}.
\end{equation*}
\end{theorem}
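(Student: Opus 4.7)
The plan is to follow the strategy used for Theorem \ref{1}, with the identity linking the last coordinate of $\Xi$ to the sum of squares replaced by the constraint $\sum_{k=1}^d\xi_k^2=1$. I argue by contradiction: assume $\hat\omega(\pmb\xi)>H_{d-1}$ and fix $\mu\in(H_{d-1},\hat\omega(\pmb\xi))$. Let $\vv_n=(q_n,\va_n)\in\Z^{d+1}$ be the consecutive best approximations to $\pmb\xi$, so that $q_1<q_2<\cdots$ and $M_n:=\|q_n\pmb\xi-\va_n\|_\infty$ is strictly decreasing. Total irrationality of $\pmb\xi$ guarantees that the $\vv_n$ form an infinite sequence of distinct integer vectors with $q_n\to\infty$, and that in any tail one can find $d+1$ linearly independent such vectors; the assumption $\hat\omega>\mu$ yields $M_n\le q_{n+1}^{-\mu}$ for all $n$ large enough.

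The starting point is the bilinear identity obtained from $\|\pmb\xi\|_2^2=1$, namely
\[
q_nq_m-\va_n\cdot\va_m=q_n\langle\pmb\xi,q_m\pmb\xi-\va_m\rangle+q_m\langle\pmb\xi,q_n\pmb\xi-\va_n\rangle-(q_n\pmb\xi-\va_n)\cdot(q_m\pmb\xi-\va_m).
\]
By Cauchy--Schwarz and $\|\pmb\xi\|_2=1$, this produces the integer bound $|q_nq_m-\va_n\cdot\va_m|\le C(q_nM_m+q_mM_n)$, and in particular $B_n:=q_n^2-\|\va_n\|^2$ is an integer of size $O(q_nM_n)$. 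This estimate plays exactly the role that $|q_na_{n,d+1}-\sum_ka_{n,k}^2|=O(q_nM_n)$ plays in the proof of Theorem \ref{1}.

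I would then run the Marnat--Moshchevitin-style determinantal analysis on $d+1$ consecutive best approximations $\vv_n,\dots,\vv_{n+d}$. Writing $\vv_i=q_i(1,\pmb\xi)+\vu_i$ with $\vu_i$ Euclidean-orthogonal to $(1,\pmb\xi)$ and $\|\vu_i\|=O(M_i)$, multilinear expansion of the $(d+1)\times(d+1)$ determinant $\det V$ produces a bound of the shape $C\sum_i q_{n+i}\prod_{j\ne i}M_{n+j}$. The sphere constraint introduces an additional linear condition on the $\vu_i$, coming from the smallness of the integer $B_{ii}$, which effectively restricts the $\vu_i$ to a subspace of dimension $d-1$ instead of $d$; this contributes an extra small factor, in exact analogy with the refinement used in Theorem \ref{1}. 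Combining $|\det V|\ge 1$ (in the generic nondegenerate case) with this improved upper bound and $M_i\le q_{i+1}^{-\mu}$, one obtains a system of inequalities in $\log q_n,\dots,\log q_{n+d+1}$, which analyzed as in Theorem \ref{A} and Theorem \ref{1} forces $\mu$ to satisfy $\mu+\mu^2+\cdots+\mu^d\le 1$, i.e., $\mu\le H_{d-1}$, contradicting the choice of $\mu$. The degenerate case $\det V=0$ for all large $n$ is handled by invoking total irrationality of $\pmb\xi$ to extract a linearly independent subfamily of best approximations.

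The main obstacle is making the improved determinantal bound precise: pinning down exactly how the sphere constraint sharpens the basic Marnat--Moshchevitin estimate, and then carrying out the combinatorial analysis of the resulting inequalities to extract the sharp threshold $H_{d-1}$. Much of this parallels Theorem \ref{1}, but one must carefully track the index shift --- here the quadratic relation involves $q^2$ rather than an additional coordinate $a_{d+1}$, and this is precisely what replaces $H_d$ by $H_{d-1}$ in the final bound.
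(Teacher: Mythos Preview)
Your approach diverges substantially from the paper's, and the sketch has a real gap.

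\textbf{What the paper does.} The paper proves Theorem~\ref{2} as the special case $s=2$, $f(\pmb x)=x_1^2+\dots+x_d^2$ of Theorem~\ref{2a}. That proof does \emph{not} rerun a Marnat--Moshchevitin determinantal argument with an extra constraint; it uses Theorem~\ref{A} as a black box after a simple dichotomy on the best approximants $\pmb\alpha_\nu=\va_\nu/q_\nu$. Either $f(\pmb\alpha_\nu)=1$ for infinitely many $\nu$, in which case Lemma~\ref{l1}(i) (the polynomial simplex lemma) applied to two consecutive approximants already forces $\hat\omega(\pmb\xi)\le\frac{s-1}{s}<H_{d-1,s}$; or $f(\pmb\alpha_\nu)\ne 1$ for all large $\nu$, in which case the nonzero rational $f(\pmb\alpha_\nu)-1$ has denominator dividing $Dq_\nu^s$, so $|f(\pmb\alpha_\nu)-f(\pmb\xi)|\ge (Dq_\nu^s)^{-1}$, while it is also $O(\zeta_\nu/q_\nu)$. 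This yields $\zeta_\nu\gg q_\nu^{-(s-1)}$, hence $\omega(\pmb\xi)\le s-1$, and then Theorem~\ref{A} gives $\hat\omega(\pmb\xi)\le H_{d-1,s}$ by solving the resulting algebraic equation.

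\textbf{Where your sketch breaks.} You correctly derive $B_n:=q_n^2-\|\va_n\|^2=O(q_nM_n)$, but then assert that this ``effectively restricts the $\vu_i$ to a subspace of dimension $d-1$'', giving an extra small factor in the determinant. That is not what the identity provides. Writing $\va_n=q_n\pmb\xi+\vr_n$, one finds $B_n=-2q_n(\vr_n\cdot\pmb\xi)+O(M_n^2)$. If $B_n=0$ you do get $\vr_n\cdot\pmb\xi=O(M_n^2/q_n)$, an honest extra smallness; but if $B_n\ne 0$ then $|B_n|\ge 1$ and no linear constraint on $\vu_n$ follows --- instead you get $M_n\gg q_n^{-1}$, i.e.\ precisely the bound $\omega\le 1$. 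So your own identity forces the very case split the paper makes; the ``uniform subspace restriction'' you invoke does not exist, and without it the proposed determinantal refinement has no traction. Moreover, your reference to ``the refinement used in Theorem~\ref{1}'' is a misreading: Theorem~\ref{1} (via Theorem~\ref{1a}) is also proved by a case analysis plus Theorem~\ref{A}, not by a sharpened $(d+1)\times(d+1)$ determinant bound.

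In short: you already have the key inequality for the hard case, but you should use it to bound $\omega$ and then invoke Theorem~\ref{A}, rather than attempt to rebuild the Marnat--Moshchevitin machinery with an unjustified geometric claim.
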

 Theorems \ref{1} and \ref{2} are particular cases of more general Theorems \ref{1a} and \ref{2a},  which we formulate in Section \ref{N}. 

{\begin{remark}
It is worth comparing Theorems \ref{1} and \ref{2}, as well as their more general versions, with {a lower bound obtained using the methods of \cite[\S 5]{KW}.  It is not hard to derive from \cite[Corollary 5.2]{KW} that for any 
 real analytic 
submanifold $M$ of $\R^m$ of 
dimension at least $2$  which is not contained in any proper rational affine
hyperplane of $\R^m$  there exists totally irrational   $\Theta\in M$ with $$\hat\omega(\Theta)  \ge \frac1m +  \frac2{m(m^2-1)}.$$
It would be interesting to see if the above estimate could be improved,} thus shedding some light on the optimality of our theorems.
\end{remark}}

\section{Intrinsic approximation on spheres}\label{intr}

Our study of  vectors of the form (\ref{oooo})
was motivated by problems related to intrinsic rational approximation  on spheres.
In \cite{K} Kleinbock and  Merrill proved the following result.

\begin{customthm}{C}\label{C}   {\cite[Theorem 4.1]{K}} Let $ d\ge 2$.
There exists a positive constant
 $C_d$ such that for any 
$\pmb{\xi} = (\xi_1, \dots,\xi_n) \in \mathbb{R}^d$ 
{satisfying} \eqref{0} and for any $T>1$ there exists 
a rational vector
\begin{equation}\label{uuu}
\pmb{\alpha} = \left( \frac{a_1}{q},\dots,    \frac{a_d}{q}\right)\in  \mathbb{Q}^d
\end{equation}
such that
$$
\pmb{|} \pmb{\alpha} \pmb{|}^2 =
\left( \frac{a_1}{q}\right)^2+ \dots + \left(\frac{a_d}{q}\right)^2 = 1
$$
and
\begin{equation}\label{kle}
\pmb{|} \pmb{\xi} - \pmb{\alpha}\pmb{|}\le \frac{C_d}{q^{1/2}T^{1/2}}
,\,\,\,\,\,\,
1\le q\le T.
\end{equation}\end{customthm}

{Here and hereafter by}
  $\pmb{|} \cdot \pmb{|}$
   we denote the Euclidean norm of a vector.
In particular Theorem~\ref{C} {implies} that in the case  $\pmb{\xi} \not \in \mathbb{Q}^d$ the inequality
$$
\pmb{|} \pmb{\xi} - \pmb{\alpha}\pmb{|}\le \frac{C_d}{q}
$$
has infinitely many solutions in rational vectors (\ref{uuu}).

{See \cite{M,M1} for effective versions of Theorem \ref{C}, and  \cite[Theorem 5.1]{F}  for {generalizations}. 
{Note that  the formulation from \cite{M} involves   sums of squares, while an effective  version for  an arbitrary positive definite quadratic form  with integer coefficients can be found in \cite{M1}.}
It is also explained in \cite{F} how  the conclusion of Theorem \ref{C} can be derived from \cite[Theorem 1]{SV} via a correspondence between   intrinsic
Diophantine approximation on quadric hypersurfaces and   approximation
of points in the boundary of the
 hyperbolic space
by parabolic fixed points of Kleinian groups; see  \cite[Proposition  3.16]{F}.}

\smallskip
{In this paper} we prove  a result about uniform intrinsic approximation on the {unit} sphere.
We need some notation. First of all, {note that} the inequality (\ref{kle}) can be rewritten as
$$
\frac{\pmb{|}q \pmb{\xi} - \pmb{a}\pmb{|}^2}{q}
\le \frac{C_d^2}{T},\,\,\,\,  \pmb{a} = (a_1,\dots,a_d) \in \mathbb{Z}^d, \,\,\,\, a_1^2+\dots+a_d^2 = q^2.
$$
Now {let us} define the function
$$
\Psi_{\pmb{\xi}} (T) =
\min_{(q, a_1,\dots,a_d)\in \mathbb{Z}^{d+1}:  \, 1\le  q\le T , \,\, a_1^2+\dots+a_d^2 = q^2} \,\,
\frac{  
 \pmb{|}q \pmb{\xi} - \pmb{a}\pmb{|}^2
}{q}.
$$
Theorem  \ref{C}  states that for any $\pmb{\xi} \in \mathbb{R}^d$ under the condition  (\ref{0}) one has
$$
T\cdot  \Psi_{\pmb{\xi}} (T) \le C_d^2\,\,\,\,\,\text{for}\,\,\,\,\, T >1.
$$

\begin{theorem}\label{3} Let $ d\ge 2$. Let $\pmb{\xi} \in\R^d\ssm \mathbb{Q}^d$  {be such that \eqref{0} is satisfied}.
Then  for any   $\varepsilon > 0$ there exists arbitrary large $T$
 such that 
 $$
 T\cdot  \Psi_{\pmb{\xi}} (T) \ge \frac{1}{4} -\varepsilon.
 $$
\end{theorem}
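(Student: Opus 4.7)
The plan is to argue by contradiction: suppose $T\cdot\Psi_{\pmb{\xi}}(T) < \tfrac{1}{4}-\varepsilon$ for every sufficiently large $T$. Consider the jumps $q_1<q_2<\cdots$ of the non-increasing step function $\Psi_{\pmb{\xi}}(\cdot)$; each jump comes with a minimizer $(q_k,\pmb{a}_k)$ with $|\pmb{a}_k|^2=q_k^2$, whose values $\psi_k\df|q_k\pmb{\xi}-\pmb{a}_k|^2/q_k$ strictly decrease. Since $\pmb{\xi}\notin\mathbb{Q}^d$ this sequence is infinite (a finite sequence would force some fixed $(q,\pmb{a})$ to satisfy $T\Psi_{\pmb{\xi}}(T)<\tfrac14-\varepsilon$ for arbitrarily large $T$, forcing $\pmb{\xi}=\pmb{a}/q$). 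Evaluating the hypothesis as $T\to q_{k+1}^-$ yields $q_{k+1}\psi_k\le\tfrac14-\varepsilon$, equivalently
\[
\Bigl|\pmb{\xi}-\frac{\pmb{a}_k}{q_k}\Bigr|^2 \le \frac{\tfrac14-\varepsilon}{q_k\,q_{k+1}}.
\]

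Next I would establish a matching lower bound on the distance between consecutive rational points on the unit sphere by integrality. Expanding with $|\pmb{a}_j|=q_j$ gives
\[
\Bigl|\frac{\pmb{a}_k}{q_k}-\frac{\pmb{a}_{k+1}}{q_{k+1}}\Bigr|^2 = \frac{2\bigl(q_k q_{k+1}-\langle \pmb{a}_k,\pmb{a}_{k+1}\rangle\bigr)}{q_k\,q_{k+1}}.
\]
By Cauchy--Schwarz the integer $q_k q_{k+1}-\langle \pmb{a}_k,\pmb{a}_{k+1}\rangle$ is non-negative, vanishing only when the two rational points on the sphere coincide. But coincidence forces $\pmb{a}_{k+1}=(q_{k+1}/q_k)\pmb{a}_k$, and a direct calculation then yields $\psi_{k+1}=(q_{k+1}/q_k)\psi_k>\psi_k$, contradicting the strict decrease of the $\psi_k$. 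Hence the integer is at least $1$ and the squared distance is at least $2/(q_k q_{k+1})$.

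Combining the two ingredients via the triangle inequality and $(a+b)^2\le 2(a^2+b^2)$ gives
\[
\frac{2}{q_k q_{k+1}} \le 2\left(\frac{\tfrac14-\varepsilon}{q_k q_{k+1}}+\frac{\tfrac14-\varepsilon}{q_{k+1}q_{k+2}}\right),
\]
which after multiplication by $q_k q_{k+1}/2$ becomes
\[
1 \le \bigl(\tfrac14-\varepsilon\bigr)\Bigl(1+\frac{q_k}{q_{k+2}}\Bigr) < 2\bigl(\tfrac14-\varepsilon\bigr) = \tfrac{1}{2}-2\varepsilon,
\]
the desired contradiction. The main obstacle will be setting up the best-approximation bookkeeping cleanly, so that both the strict decrease of the $\psi_k$ and the passage-to-the-limit inequality $q_{k+1}\psi_k\le\tfrac14-\varepsilon$ are justified; once this is in place, the remaining geometric-integrality argument is elementary, and the hypothesis $\pmb{\xi}\notin\mathbb{Q}^d$ (rather than any linear-independence over $\mathbb{Z}$) is all that is needed.
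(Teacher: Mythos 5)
Your proof is correct, and it follows the same skeleton as the paper's argument (which deduces Theorem 3.1 from Theorem 3a): look at the discontinuity points $q_1<q_2<\cdots$ of $\Psi_{\pmb{\xi}}$, bound the distance between consecutive minimizers from below by an integrality (``simplex lemma'') estimate, and from above via the triangle inequality and the quality of approximation. The genuine difference is in the key lemma. The paper invokes its general polynomial simplex lemma (Lemma 5.1\,{\bf (ii)}), which for $f=\sum x_i^2$, $s=2$, $D=K=1$ gives only $\pmb{|}\pmb{\alpha}-\pmb{\beta}\pmb{|}^2\ge 1/(qr)$; you instead expand $\pmb{|}\pmb{\alpha}_k-\pmb{\alpha}_{k+1}\pmb{|}^2=2\bigl(q_kq_{k+1}-\langle\pmb{a}_k,\pmb{a}_{k+1}\rangle\bigr)/(q_kq_{k+1})$ and note that the numerator is a positive \emph{even} integer --- positivity coming from Cauchy--Schwarz together with your correct exclusion of coincident approximants via the strict decrease of the $\psi_k$ --- which yields the sharper bound $2/(q_kq_{k+1})$. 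Your remaining bookkeeping (infinitude of the jump sequence under the contradiction hypothesis, the limiting inequality $q_{k+1}\psi_k\le\tfrac14-\varepsilon$, and the non-coincidence argument) is sound and matches what the paper needs but states more tersely. A consequence worth noting: your computation proves more than the stated theorem. Running it with $\tfrac14-\varepsilon$ replaced by any $c\le\tfrac12-\varepsilon$ still gives $1\le c\,(1+q_k/q_{k+2})<2c<1$, so your argument establishes $T\cdot\Psi_{\pmb{\xi}}(T)\ge\tfrac12-\varepsilon$ for arbitrarily large $T$; the paper's route, which loses the factor $2$ in the simplex lemma and bounds both error terms by the larger one, only reaches $\tfrac14$.
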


 Theorem \ref{3}  is an analog of Khintchine's lemma on rational approximations to one real number
   (see   \cite[Satz 1]{Khi}).
 It admits the following {corollary}. 
  One can try to define the {\sl  uniform Diophantine exponent}  {of} $\pmb{\xi} $ 
  for the intrinsic approximation on {the unit} sphere
 as
 $$
{ \hat{\omega}_d^{\bf i}  (\pmb{\xi})
 = \sup
\left \{\gamma \in \mathbb{R} \left|\begin{aligned}\text{ the inequalities} \,\,\,\,\,\,\,
\pmb{|} \pmb{\xi} - \pmb{\alpha}\pmb{|}\le \frac{1}{q^{1/2}T^{\gamma/2}},\,\,\,\,\,
1\le q\le T\quad  \\
\text{ are solvable in  
}   \pmb{\alpha} 
\text{ of the form (\ref{uuu}) for 
large enough } T \end{aligned}\right.\right\}.}
$$
Then  for all vectors  $\pmb{\xi}  \not\in \mathbb{Q}^d$ 
 {satisfying} (\ref{0})  we have 
 $$
  \hat{\omega}_d^{\bf i}  (\pmb{\xi})=1
 $$
 by Theorem \ref{3}.
 So here we have an equality similar to (\ref{odin}) for  the case of approximation to one real number. {See also \cite[Theorem 2]{BGSV}, where {a similar} observation was made in the context of Kleinian groups.}
  Theorem~\ref{3}  follows from a more general Theorem \ref{3a}
   which we formulate in Section~\ref{N}.


\section{{Results on homogeneous polynomials}} \label{N}

{Given  integers $s \ge 2$ and $d\ge 1$,} define $H_{d,s}$ {to be} the unique positive root of the equation
{\begin{equation}\label{0qqq}
 (1-x) =x\cdot \sum_{k=1}^{d} \left(\frac{x}{s-1}\right)^k.
\end{equation}
{Note that for any $s$ and $d$ one has}
$$
\frac{s-1}{s} < H_{d,s} <1.
$$
{Clearly}  $H_{d,2} = H_d$,  and $ H_{d,s}  $ monotonically decreases to $\frac{s-1}{s}$ as $ d \to +\infty$.

The results of this section deal with
 a homogeneous polynomial 
\begin{equation}\label{polyh}
f(\pmb{x} ) =
\sum_{(s_1,\dots,s_d) \in \mathbb{Z}^d_+:\,\,
s_1+\dots+s_d = s}
f_{s_1,\dots,s_d} x_1^{s_1}\cdots x_d^{s_d},\quad{\text{where }f_{s_1,\dots,s_d}\in \mathbb{Q},}
\end{equation}
of degree $s$ in  variables $x_1,\dots,x_d$
(here $\mathbb{Z}_+$ stands for the set of non-negative integers).
Theorem \ref{1} from the previous section is a corollary of the following general statement.

\begin{customthm}{1a}\label{1a} 
Let $s\ge 2$ be an integer,  {and let $f$ as in \eqref{polyh} be such that 
\begin{equation}\label{rank1}
{\#\{\pmb{x}\in\Q^d: f(\pmb{x} )= 0\} < \infty}
.\end{equation}
Suppose that}
\begin{equation}\label{ooooW}
\Xi_f = \big(\xi_1,\dots,\xi_d, f(\xi_1,\dots,\xi_d)\big)
\end{equation}
{is totally irrational.} Then $\hat{\omega}({\Xi_f})\le H_{d,s}$.
\end{customthm}

We give a proof of Theorem \ref{1a} in Section \ref{t4}.

\begin{remark}\label{r2}
Let us consider the case $  d=1$. In this case Theorem \ref{1a} states that the uniform exponent $\hat{\omega}$ of 
$(\xi, \xi^s)$ 
is bounded from 
{above} by the positive root of the equation
$$
x^2 +(s-1)x-(s-1) = 0,
$$
that is
\begin{equation}\label{ba}
\hat{\omega} \le \frac{\sqrt{(s-1)(s+3)}-(s-1)}{2}.
\end{equation}
This result  was obtained by  Batzaya in \cite{B1}
for arbitrary vectors of the form $(\xi^l,\xi^s)$ with $1\le l < s$. 
In the case  $d=1 ,s=3$ much 
stronger inequality
$$
\hat{\omega} \le  \frac{2(9 + \sqrt{11})}{35}
$$
is known due to Lozier and Roy (see \cite{LR} and the discussion therein).
In \cite{B2} Batzaya  improved (\ref{ba}) and showed that
for   $(\xi^l,\xi^s)$ with $1\le l < s$ one has
$$
\hat{\omega} \le  
\frac{s^2-1}{s^2-s-1}
$$
for odd $s$. In the case of even $s$  in the paper \cite{B1} he had  a better inequality
$$
\hat{\omega} \le  
\frac{(s-1)(s+2)}{s^2+2s-1}.
$$
Also   \cite{B2} contains {a} better bound for $
\hat{\omega}$
when $ s = 5,7,9$.
Thus the inequality of our Theorem 1a is not optimal for $s \ge 3$.
\end{remark}


 
Theorem \ref{2} from the previous section is a corollary of the following general statement.

\begin{customthm}{2a}\label{2a} Let $s\ge 2$ be an integer,  {and let $f$ as in \eqref{polyh} be such that \eqref{rank1} holds.}
Then 
{\begin{equation}\label{conclusion}
\pmb{\xi}\in \mathbb{R}^d\text{ is totally irrational and } f(\pmb{\xi}) = 1\quad\Longrightarrow\quad
\hat{\omega}(\pmb{\xi})\le H_{d-1,s}.
\end{equation}}
\end{customthm}

We give a proof of Theorem \ref{2a} in Section \ref{t5}.
To get Theorems  \ref{1}, \ref{2} from Theorems \ref{1a}, \ref{2a} one should put $ s= 2$ and $f(\pmb{x}) = x_1^2+\dots+x_d^2$.

{\begin{remark}\label{r3} The argument used in the proof of Theorem \ref{2a} yields \eqref{conclusion} for any (not necessarily homogeneous) polynomial  $f$   with rational coefficients 
 such that the number of rational points on the hypersurface $\{f = 1\}$ is finite. We state it as Theorem \ref{2b} in Section  \ref{t5}. For example (cf.~\eqref{ba} with $d=1$ and $s = 6$)  it follows that $$\hat{\omega}(x,y) \le \frac{\sqrt{45}-5}2$$  for any $(x,y)\in\R^2$ such that $y^2 - x^2 - x^6 = 1$.\end{remark}
}
 
Now for $\pmb{\xi}\in \mathbb{R}^d$ under the condition $
f(\pmb{\xi }) = 1
$
consider the function
$$
\Psi_{f,\pmb{\xi}} (T) =
\min_{( q,\pmb{a}) = (q,a_1,\dots,a_d)\in \mathbb{Z}^{d+1}:\,\,
1\le q\le T,\,\,  f\left(\frac{a_1}{q},\dots,\frac{a_d}{q}\right) = 1}\,\,\,\,\,
\frac{\pmb{|}   q\pmb{\xi}- \pmb{a} \pmb{|}^s}{q}.
$$
 It is clear that $
\Psi_{f,\pmb{\xi}} (T) $ is a non-increasing piecewise constant function. Here we do not suppose that it tends to zero as $T \to +\infty$.

\begin{customthm}{3a}\label{3a} 
 Let $s\ge 2$ be an integer,  {and let $f$ as in \eqref{polyh} be such that \eqref{rank1} holds.
Take} 
$\pmb{\xi} \not\in \mathbb{Q}^d$ {with}
$$
f(\pmb{\xi }) = 1,
$$
  and
let $D = D(f)\in \mathbb{Z}_+$ be the common denominator of all rational numbers $f_{s_1,\dots,s_d}$.
{Also define}
\begin{equation}\label{20qqq}
{ K} =K(f) = \sup_{\pmb{x}\in\mathbb{R}^d: \,\, \pmb{|}   \pmb{x} \pmb{|} = 1}
|f(\pmb{x})|.
\end{equation}
Then  for any positive $\varepsilon$ there exists arbitrary large $T$
 such that 
 $$
 T^{s-1}\cdot  \Psi_{f,\pmb{\xi}} (T) \ge \frac{1}{2^sD{ K}  } -\varepsilon.
 $$
 \end{customthm}

For $ f (\pmb{x}) = x_1^2+\dots+x_d^2$ we have $ s = 2$ {and} $D(f) = K(f) = 1$.
{Thus} Theorem \ref{3} is a direct corollary of Theorem \ref{3a}.
We give a proof of Theorem \ref{3a} in Section \ref{t6}.

\section{{The main lemma}} \label{lem}

{The next lemma is a 
{polynomial} analogue of the classical simplex lemma {in simultaneous \da\ going back to Davenport  \cite{D}}. See {also} \cite{KS} for a {version for} {arbitrary quadratic forms},   \cite[Lemma 1]{BGSV} for a similar statement in the context of Kleinian groups, and  \cite[Lemma 4.1]{fkmsgeneral}  for a general simplex lemma for intrinsic Diophantine approximation on manifolds.}

\begin{lemma}\label{l1}
Let
$s\ge 2 $ be an integer,  {and let $f$ be as in \eqref{polyh}.}
Let $D =D(f)$ and $K=K(f)$ be defined as in Theorem \ref{3a}, {and take two rational vectors}
$$\pmb{\alpha} = \left( \frac{a_1}{q},\dots,    \frac{a_d}{q}\right)
{\text{ and \ }}
\pmb{\beta} = \left( \frac{b_1}{r},\dots,    \frac{b_d}{r}\right) $$
{such} that 
\begin{equation}\label{zzz}
f(\pmb{\alpha}-\pmb{\beta} )\neq 0.
\end{equation}
   
    \noindent {\bf (i)}
   Suppose that
   $$
  f(\pmb{\alpha}) = \frac{A}{q}
$$
with an integer $A$.
   Then
  \begin{equation}\label{oo}
   \pmb{|} \pmb{\alpha} - \pmb{\beta}\pmb{|}^s
   \ge 
   \frac{1}{DKq^{s-1} r^s}\,.
   \end{equation}
   
      \noindent {\bf (ii)}
      Suppose  
$$
  f(\pmb{\alpha})=\frac{A}{q},\,\,\,
  f(\pmb{\beta}) = \frac{B}{r}
$$
with integers $A,B$.
   Then
 \begin{equation}\label{ooooo}
   \pmb{|} \pmb{\alpha} - \pmb{\beta}\pmb{|}^s
   \ge 
\frac{1}{DKq^{s-1} r^{s-1}}\,.
\end{equation}

   \end{lemma}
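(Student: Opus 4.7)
The plan is to combine two bounds on $|f(\pmb{\alpha}-\pmb{\beta})|$: a lower bound coming from integrality, and the obvious upper bound $|f(\pmb{\alpha}-\pmb{\beta})|\le K\,\pmb{|}\pmb{\alpha}-\pmb{\beta}\pmb{|}^s$ which follows immediately from the homogeneity of $f$ together with the definition \eqref{20qqq} of $K$. The difference between \eqref{oo} and \eqref{ooooo} will reduce to how many extra factors of $q$ or $r$ we can eliminate from the denominator of $f(\pmb{\alpha}-\pmb{\beta})$, using the hypotheses on $f(\pmb{\alpha})$ and (in (ii)) also on $f(\pmb{\beta})$.

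The key preliminary step is the bihomogeneous expansion
$$f(\pmb{x}-\pmb{y}) = \sum_{k=0}^{s} F_k(\pmb{x},\pmb{y}),$$
where $F_k$ is bihomogeneous of bidegree $(k,s-k)$ with coefficients whose common denominator divides $D$; this is obtained by applying the binomial theorem to each factor $(x_j-y_j)^{s_j}$ in \eqref{polyh}. In particular $F_s(\pmb{x},\pmb{y}) = f(\pmb{x})$ and $F_0(\pmb{x},\pmb{y}) = (-1)^s f(\pmb{y})$, and for every $k$ bihomogeneity gives
$$q^{k}r^{s-k}F_k(\pmb{\alpha},\pmb{\beta}) = F_k(\pmb{a},\pmb{b}) \in \tfrac{1}{D}\Z.$$

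For (i) I would verify that $N := Dq^{s-1}r^s f(\pmb{\alpha}-\pmb{\beta})$ is an integer. For $0\le k\le s-1$ the contribution of $F_k$ to $N$ equals $q^{s-1-k}r^{k}\cdot DF_k(\pmb{a},\pmb{b}) \in \Z$, while for $k=s$ the hypothesis $f(\pmb{\alpha})=A/q$ gives contribution $Dq^{s-1}r^s\cdot A/q = DAq^{s-2}r^s\in\Z$ (here $s\ge 2$ is essential). Hypothesis \eqref{zzz} forces $N\ne 0$, hence $|N|\ge 1$, and combining with the upper bound on $|f(\pmb{\alpha}-\pmb{\beta})|$ yields \eqref{oo}. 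For (ii) the same bookkeeping shows that $Dq^{s-1}r^{s-1}f(\pmb{\alpha}-\pmb{\beta})\in\Z$: the range $1\le k\le s-1$ is handled exactly as before (with factor $q^{s-1-k}r^{k-1}$), the $k=s$ contribution is again $DAq^{s-2}r^{s-1}$, and the $k=0$ contribution is now also an integer thanks to the extra hypothesis $f(\pmb{\beta})=B/r$, which makes it equal to $(-1)^sDBq^{s-1}r^{s-2}$.

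The main (and quite mild) obstacle is the denominator bookkeeping across the bihomogeneous decomposition at the extreme indices $k=s$ and $k=0$; the hypotheses on $f(\pmb{\alpha})$ and $f(\pmb{\beta})$ are precisely what is needed to save a factor of $q$ and a factor of $r$ respectively, thereby sharpening the naive bound $|f(\pmb{\alpha}-\pmb{\beta})|\ge 1/\bigl(DK(qr)^s\bigr)$ to the estimates claimed in (i) and (ii).
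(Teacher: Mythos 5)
Your proof is correct and follows essentially the same route as the paper: both arguments establish integrality of $Dq^{s-1}r^s f(\pmb{\alpha}-\pmb{\beta})$ (resp.\ $Dq^{s-1}r^{s-1}f(\pmb{\alpha}-\pmb{\beta})$) by expanding $f(\pmb{\alpha}-\pmb{\beta})$ and isolating the extreme terms $f(\pmb{\alpha})$ and $f(\pmb{\beta})$, then combine the resulting lower bound with the homogeneity estimate $|f(\pmb{\alpha}-\pmb{\beta})|\le K\pmb{|}\pmb{\alpha}-\pmb{\beta}\pmb{|}^s$. Your bihomogeneous decomposition into the $F_k$ is just a more systematic bookkeeping of what the paper packages into the integers $W$, $W'$.
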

   
\begin{proof}

\noindent {\bf (i)}
 \, First of all  
we observe   that
 \begin{equation}\label{1qqq}
 | f(\pmb{\alpha} -\pmb{\beta})|\ge \frac{1}{Dq^{s-1} r^s}.
 \end{equation}
 Indeed, 
 for any $s_1,\dots,s_d \in \mathbb{Z}_+$ under the condition $s_1+\dots+s_d = s$
 consider the product
 $$
 \Pi_{s_1,\dots,s_d} = \prod_{k=1}^d 
 \left(\frac{a_k}{q}-\frac{b_k}{r}\right)^{s_k}.
 $$
 It is clear that
 $$
  \Pi_{s_1,\dots,s_d}  = \frac{\prod_{k=1}^d a_k^{s_k}}{q^s}+ \frac{W_{s_1,\dots,s_d}}{q^{s-1}r^s}
  $$
  with an integer $W_{s_1,\dots,s_d}$. Now from (\ref{zzz}) we see that
 \begin{equation*}
 \begin{aligned}
  0\neq f(\pmb{\alpha} -\pmb{\beta})
&=
\sum_{(s_1,\dots,s_d) \in \mathbb{Z}^d_+:\,\,
s_1+\dots+s_d = s}
f_{s_1,\dots,s_d}
 \Pi_{s_1,\dots,s_d} \\ &=
 f(\pmb{\alpha} )  + 
 \frac{W}{Dq^{s-1}r^s} = \frac{A}{q}  + 
 \frac{W}{Dq^{s-1}r^s} =
  \frac{W_1}{Dq^{s-1}r^s} 
 ,
 \end{aligned}
 \end{equation*}
 with $W, W_1\in \mathbb{Z}$,
 and (\ref{1qqq}) is proved.
Then from the definition (\ref{20qqq}) we see that
  \begin{equation}\label{201qqq}
|f(\pmb{\alpha}-\pmb{\beta})| \le  K \pmb{|} \pmb{\alpha}-\pmb{\beta}\pmb{|}^s .
\end{equation}
Now (\ref{1qqq}) and (\ref{201qqq}) give (\ref{oo}).
 
 \smallskip
 
   \noindent
   {\bf (ii)}\, The proof here is quite similar. From the conditions on
   $f(\pmb{\alpha})$ and
 $f(\pmb{\beta})$ we see that
  $$
  0\neq f(\pmb{\alpha} -\pmb{\beta})
  =
 f(\pmb{\alpha} )  \pm  f(\pmb{\beta} ) +
 \frac{W'}{Dq^{s-1}r^{s-1}} =  \frac{A}{q}    \pm  \frac{B}{r}  +
 \frac{W'}{Dq^{s-1}r^{s-1}}=  \frac{W_1'}{Dq^{s-1}r^{s-1}}
 ,
 $$
 with $W', W_1'\in \mathbb{Z}$. So we get
  \begin{equation}\label{13qqq}
 | f(\pmb{\alpha} -\pmb{\beta})|\ge \frac{1}{Dq^{s-1} r^{s-1}}.
 \end{equation}
Now (\ref{13qqq}) together with  (\ref{201qqq}) give (\ref{ooooo}).
 
 \end{proof}


\section{Best approximation vectors 
}\label{best}
 {If $\Theta = (\theta_1,\dots,\theta_m)\in\R^m\ssm\Q^m$, 
let us say that a vector $
   (q, a_{1},\dots,a_{m}) \in \mathbb{Z}^{m+1}$ is a {\sl best simultaneous approximation vector} of $\Theta$ if 
   $$
 \dist(q\Theta, \Z^m) <   \dist(k\Theta, \Z^m)\quad  \forall\, k = 1,\dots,q-1,
 $$
 and 
 $$
   \dist(q\Theta, \Z^m) = \max_{i = 1,\dots,m}   |q \theta_i - a_{i}| .
 $$   
 Here `dist' stands for the distance induced by the supremum norm on $\R^m$. Best approximation {vectors} of $\Theta$   form an infinite sequence $ (q_\nu, a_{1,\nu},\dots,a_{m,\nu})$, $\nu\in\N$,  and satisfy  the inequalities
 $$
   q_{\nu-1} < q_\nu,\,\,\,\,\,\,\,
   \zeta_{\nu-1} > \zeta_\nu, \,\,\,\,\,\,\,
  {\nu\in\N},
   $$  
   where one defines
 $$   \zeta_\nu =  \max_{i = 1,\dots,m}   |q_\nu \theta_i - a_{i,\nu}|.$$} 
 It is important that
 $$
 {\rm g.c.d. } (q_\nu, a_{1,\nu},\dots,a_{m,\nu}) = 1,\,\,\,\,\,
   {\nu\in\N}.
   $$
   So for any two successive rational approximation vectors
   $$
   \pmb{\alpha}_j =
   \left(
   \frac{a_{1,j}}{q_j},\dots,
   \frac{a_{m,j}}{q_j}
   \right) \in \mathbb{Q}^m,\,\,\,\, j = \nu-1, \nu
   $$
   we have
   $$
   \pmb{\alpha}_{\nu-1} \neq \pmb{\alpha}_\nu.
   $$

Some detailed information about best approximation vectors may be found for example in papers
   \cite{Ch} and \cite{MK}.  {In particular,} the following property of the uniform exponent $\hat{\omega} = \hat{\omega} ({\Theta})$ is well known (see {e.g.}\   \cite[Proposition 1]{MK}).
    Suppose that $ \gamma <\hat{\omega}$.
   Then for all $\nu $ large enough one has
   \begin{equation}\label{ww}
   \zeta_{\nu-1} \le q_\nu^{-\gamma}.
   \end{equation}

    \section{ Proof of Theorem \ref{2a}}\label{t5}

    We take $m=d$ and consider  best approximation vectors  
   $$
   \pmb{z}_\nu =( q_\nu, a_{1,\nu},\dots, a_{d,\nu})\in \mathbb{Z}^{d+1}
   $$
     {of} 
     $\pmb{\xi}  = (\xi_1,\dots,\xi_d)\in\mathbb{R}^d$, {together with  distances}
   $$
   \zeta_\nu = \max_{1\le j \le d} |q_\nu\xi_j - a_{j,\nu}|,
   $$
   and the corresponding rational approximants
   $$
   \pmb{\alpha}_\nu =\left(  \frac{a_{1,\nu}}{q_\nu},\dots,\frac{ a_{d,\nu}}{q_\nu}\right)\in \mathbb{Q}^{d}.
   $$
   Under the condition $\gamma < \hat{\omega} (\pmb{\xi})$  we have (\ref{ww})
  for all large $\nu$.
  
  Here we should note that
  $$
  \max_{1\le j \le d} \left|\xi_j - \frac{a_{j,\nu}}{q_\nu}\right| =
  \frac{\zeta_\nu}{q_\nu}.
  $$
So for large $\nu$ we see  that

\begin{equation}\label{6qqq}
 \left(\frac{a_{1,\nu }}{q_{\nu}}\right)^{s_1}  \cdots\ 
\left(\frac{a_{d,\nu-1}}{q_{\nu}}\right)^{s_d} = \xi_1^{s_1}\dots\xi_d^{s_d} + O\left(
\frac{\zeta_\nu}{q_\nu}\right).
\end{equation}

We consider two cases.

\medskip
\noindent {\bf Case 1.}  $ f (\pmb{\alpha}_\nu )=1 $ for infinitely many $\nu$.
\smallskip

Here,  {since} 
$\pmb{\alpha}_\nu\neq\pmb{\alpha}_{\nu-1}$,
we may apply Lemma \ref{1}{\bf (i)}
with $ A=q_\nu$.
{Take $\pmb{\alpha} = \pmb{\alpha}_\nu$, $\pmb{\beta}=\pmb{\alpha}_{\nu-1}$; then \eqref{zzz} follows from \eqref{rank1} when $\nu$ is large enough, and 
from} (\ref{oo})     we deduce that
\begin{equation}\label{c1}
\begin{aligned}
 \frac{1}{(DK)^{\frac{1}{s}}q_{\nu}^{\frac{s-1}{s} }q_{\nu-1}}
 &\le
 \sqrt{d}  \max_{1\le k\le d}
 \left|
 \frac{a_{k,\nu}}{q_\nu}-
  \frac{a_{k,\nu-1}}{q_{\nu-1}}
 \right|
\\& \le
 \sqrt{d}  
  \left(
  \max_{1\le k\le d}
 \left|
 \frac{a_{k,\nu}}{q_\nu}-\xi_k
 \right|+
  \max_{1\le k\le d}
 \left|
  \frac{a_{k,\nu-1}}{q_{\nu-1}}
 -\xi_k
 \right|
 \right)\\
& \le
\frac{2 \sqrt{d}  \zeta_{\nu-1}}{q_{\nu-1}}
\le
\frac{2 \sqrt{d}  }{q_{\nu-1}q_\nu^\gamma}\,.
\end{aligned}
 \end{equation}
 So  with some positive $c_1$ we have $ q_\nu^\gamma \le c_1 q_\nu^{\frac{s-1}{s}}$ for infinitely many $\nu$ and
 $\hat{\omega} (\pmb{\xi})\le \frac{s-1}{s} <H_{d-1,s}$.

\vfil\eject
\noindent {\bf Case 2.}
$ f (\pmb{\alpha}_{\nu})\neq 1 $ for all $\nu$ large enough.
\smallskip

For such $\nu$ the difference
$f (\pmb{\alpha}_\nu )-1$ is a nonzero rational number with denominator $Dq_{\nu}^s$. {Therefore}
 we have 
 $$| f (\pmb{\alpha}_\nu )-1| \ge\frac{1}{Dq_\nu^s}.
 $$
 Now  from (\ref{6qqq}) we see that
\begin{equation}\label{c}
\frac{1}{Dq_\nu^s}\le 
 |f (\pmb{\alpha}_\nu )-1| =
 |f (\pmb{\alpha}_\nu )  -  f (\pmb{\xi} )|=
 O\left(
\frac{\zeta_\nu}{q_\nu} \right)
\end{equation}
and
\begin{equation}\label{5qqq}
\zeta_\nu \ge \frac{c_2}{q_\nu^{s-1}}
\end{equation}
with some positive constant $c_2$ depending on $f$ and $\pmb{\xi}$. As this inequality holds for all large $\nu$, we conclude that
$\omega (\pmb{\xi}) \le s-1$
and
$$
            \frac{s-1}{\hat{\omega} (\pmb{\xi}) }\ge \frac{\omega (\pmb{\xi}) }{\hat{\omega} (\pmb{\xi}) }\ge G_d.
  $$
  Recall that $G_d$ is a root of equation (\ref{root}).
This means that the upper bound for $\hat{\omega} (\pmb{\xi})$ is given by the unique positive root of the equation 
$$
\left(\frac{s-1}{x}\right)^{d-1} =
   \frac{x}{1-x} \left( \left(\frac{s-1}{x}\right)^{d-2} + \left(\frac{s-1}{x}\right)^{d-3} + \dots+ 
   \frac{s-1}{x}
   +1\right)
   $$
which coincide{s} with
(\ref{0qqq}) {if} $d$ {is} replaced by $d-1$. \qed
    
       \medskip

    {We close the section by observing that  the  argument used in the proof of Case 2 above does not rely on the homogeneity of $f$. Thus the following result can be established.}
\begin{customthm}{2b}\label{2b}   {Suppose that
$f$ is an {\it arbitrary} polynomial of degree $s$ in $d$ variables  with rational coefficients 
 such that 
$$\#\{\pmb{x}\in\Q^d: f(\pmb{x} )= 1\} < \infty,$$
and let $\pmb{\xi} = (\xi_1,...,\xi_d)\notin\Q^d$ be such that
$f(\pmb{\xi} )= 1$.
Then: }
\smallskip

 \noindent  {{\bf (i)} $\omega(\pmb{\xi}) \le s-1$;}
  
\smallskip
       \noindent  {{\bf (ii)}
if  $\pmb{\xi}$ is totally irrational, then $\hat{\omega}(\pmb{\xi}) \le H_{d-1,s}$.}
\end{customthm}
      
{The proof is left to the reader. In particular, the conclusion of Theorem \ref{2b} holds when $\{f = 1\}$ is an algebraic curve over $\Q$ of genus at least $2$, such as the one mentioned in Remark~\ref{3}.}

  \ignore
  {{\begin{proof} We follow the lines of Case 2 above. If   $
   \pmb{a} \in \mathbb{Z}^{d}
   $, $q\in\N$ and  $D $ is the common denominator of all the coefficients of $f$, we have 
 $$\left|f \left(\frac{\pmb{a}}q  \right)-1\right| \ge\frac{1}{Dq^s}
 $$ if $q$ is large enough. On the other hand,
 $$
 \left|f \left(\frac{\pmb{a}}q  \right)-1\right| =
\left|f \left(\frac{\pmb{a}}q  \right)-f (\pmb{\xi} )\right| =
 O\left(\frac{\pmb{|}q\pmb{a}  - \pmb{\xi}\pmb{|}}q \right).$$
Hence $$\pmb{|}q\pmb{a}  - \pmb{\xi}\pmb{|} \ge \frac{c_3}{q^{s-1}}$$ with some positive constant $c_3$ depending on $f$ and $\pmb{\xi}$, and {\bf (i)} follows. The derivation of  {\bf (ii)} is identical to the last part of the proof of Case 2.
 \end{proof}}}
 
\section{ Proof of Theorem \ref{1a}}\label{t4}

The proof of Theorem \ref{1a} is similar to the proof of Theorem \ref{2a}.
We take $m = d+1$ and consider a sequence of best simultaneous approximation vectors 
   $$
   \pmb{z}_\nu = (q_\nu, a_{1,\nu},\dots,a_{d,\nu}, A_\nu) \in \mathbb{Z}^{d+2}, \,\,\,\, {\nu\in\N},
   $$
of  $\Theta = \Xi_f$ as in \eqref{ooooW},
   and the corresponding  distances from $q_\nu\Xi_f$ to $\Z^{d+1}$: 
   $$
   \zeta_\nu=\max\big(
   |q_\nu\xi_1 - a_{1,\nu}|,\dots,
   |q_\nu\xi_d - a_{d,\nu}|
   ,
   |q_\nu f (\pmb{\xi}) - A_\nu|\big) 
   .
   $$
   We also need ``{shortened}" rational approximation vectors
   $$
   \pmb{\alpha}_\nu =\left(  \frac{a_{1,\nu}}{q_\nu},\dots,\frac{ a_{d,\nu}}{q_\nu}\right)\in \mathbb{Q}^{d}.   $$
   {Note that} now it may happen that  
   \begin{equation}\label{anu}
     \pmb{\alpha}_{\nu -1} =   \pmb{\alpha}_\nu 
   \end{equation}
   for some $\nu$.
   
\begin{lemma}\label{l2}
   Suppose that 
     \eqref{anu} holds and
     \begin{equation}\label{dell}
     f(\pmb{\alpha}_\nu) =\frac{A_\nu}{q_\nu }.
     \end{equation}
   Then
   \begin{equation}\label{deelt}
   \Delta = {\rm g.c.d.}  (q_\nu, a_{1,\nu},\dots,a_{d,\nu}) =
   O\left( q_\nu^{\frac{s-1}{s}}\right).
   \end{equation}
   \end{lemma}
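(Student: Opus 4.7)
My plan is to exploit the homogeneity of $f$ together with the fact that a best simultaneous approximation vector has coprime coordinates. Let $g = Df$, an integer-coefficient polynomial which is still homogeneous of degree $s$. Since $\pmb{z}_\nu = (q_\nu, a_{1,\nu},\dots,a_{d,\nu}, A_\nu)$ is a best approximation vector, $\gcd(q_\nu, a_{1,\nu},\dots,a_{d,\nu}, A_\nu) = 1$, hence $\gcd(\Delta, A_\nu) = 1$. Write
$$q_\nu = \Delta q',\qquad a_{j,\nu} = \Delta a'_j\quad(j=1,\dots,d),$$
so that $\gcd(q',a'_1,\dots,a'_d) = 1$.

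Next I would compute $f(\pmb{\alpha}_\nu)$ using homogeneity:
$$f(\pmb{\alpha}_\nu) = \frac{g(a_{1,\nu},\dots,a_{d,\nu})}{Dq_\nu^s} = \frac{\Delta^s\, g(a'_1,\dots,a'_d)}{D\,\Delta^s q'^{\,s}} = \frac{g(a'_1,\dots,a'_d)}{Dq'^{\,s}}.$$
Plugging this into the hypothesis \eqref{dell}, namely $f(\pmb{\alpha}_\nu) = A_\nu/q_\nu = A_\nu/(\Delta q')$, and clearing denominators yields
$$\Delta\cdot g(a'_1,\dots,a'_d) = DA_\nu\, q'^{\,s-1}.$$
In particular $\Delta$ divides $DA_\nu q'^{\,s-1}$; combined with $\gcd(\Delta, A_\nu) = 1$ this gives $\Delta \mid D q'^{\,s-1}$, so $\Delta \le Dq'^{\,s-1}$. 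Replacing $q' = q_\nu/\Delta$ transforms this into $\Delta^s \le Dq_\nu^{s-1}$, i.e.
$$\Delta \le D^{1/s} q_\nu^{(s-1)/s},$$
which is exactly \eqref{deelt}.

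The main conceptual point is that, a priori, the value $f(\pmb{\alpha}_\nu)$ has denominator as large as $Dq_\nu^s$; the hypothesis \eqref{dell} forces this denominator to collapse down to $q_\nu$, and the entire source of that collapse must be the homogeneous factor $\Delta^s$ produced by the scaling $a_{j,\nu}=\Delta a'_j$. Turning this observation into the claimed divisibility relation is the whole argument; coprimality of $\Delta$ and $A_\nu$ (which comes for free from the best-approximation property) is what converts the divisibility statement into an arithmetic upper bound on $\Delta$. The hypothesis \eqref{anu} does not appear to be used for the upper bound itself — it is, rather, a companion statement forcing $\Delta \ge q_\nu/q_{\nu-1}$, so that the content of the lemma becomes genuinely restrictive; I expect its lower-bound consequence to enter only in the subsequent application of \eqref{deelt} to the proof of Theorem~\ref{1a}.
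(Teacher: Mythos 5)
Your proof is correct and follows essentially the same route as the paper's: both exploit the homogeneity of $Df$ to pull the factor $\Delta^s$ out of $Dq_\nu^s f(\pmb{\alpha}_\nu)=DA_\nu q_\nu^{s-1}$ and use $\gcd(\Delta,A_\nu)=1$ (coming from primitivity of the best approximation vector) to conclude $\Delta^s\mid Dq_\nu^{s-1}$, hence $\Delta\le D^{1/s}q_\nu^{(s-1)/s}$. Your side observation that \eqref{anu} is not needed for the bound itself is also consistent with the paper, where that hypothesis only enters in the application of the lemma (Case 1.1 of the proof of Theorem 1a).
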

   
   \begin{proof}   We know that
   $$
   {\rm g.c.d.}  (q_\nu, a_{1,\nu},\dots,a_{d,\nu}, A_\nu)  =1
   $$
   and {thus} 
   $$
   {\rm g.c.d.} (\Delta, A_\nu) = 1.
   $$
   From (\ref{dell}) we see that
   $$
   DA_\nu q_\nu^{s-1} = D q_\nu^s f(\pmb{\alpha}_\nu)\in \mathbb{Z}.
   $$
   But $\Delta \,| \, a_{j,\nu} $ for any $j$. As $D q_\nu^s f\left(  \frac{\cdot}{q_\nu}\right)$ is a homogeneous polynomial of degree $s$ with integer coefficients, we deduce that
   $$
   \Delta^s \,| \, Dq_{\nu}^{s-1}.
   $$
   This gives (\ref{deelt}).
   \end{proof}

   To prove Theorem \ref{1a} we consider three cases.
   
   \medskip
   \noindent {\bf Case 1.1.}    For infinitely many $\nu$ {(\ref{anu}) and (\ref{dell}) hold}.
   \smallskip
   
In this case the vectors
$$
(q_{\nu-1}, a_{1,\nu-1},\dots,a_{d,\nu-1}),\,\,\,\,\,
 (q_\nu, a_{1,\nu},\dots,a_{d,\nu})
 $$
 are proportional, but the vectors
$$
(q_{\nu-1}, a_{1,\nu-1},\dots,a_{d,\nu-1},A_{\nu-1}),\,\,\,\,\,
 (q_\nu, a_{1,\nu},\dots,a_{d,\nu}, A_\nu)
 $$
 are not proportional. This means that
\begin{equation}\label{ppop}
\left|
\begin{array}{cc}
q_{\nu-1} &A_{\nu-1}
\cr
q_{\nu} &A_{\nu}
\end{array}
\right| \neq 0
.
\end{equation}
There exists a primitive vector
$$
(q_*, a_{1,*},\dots,a_{d,*})\in\mathbb{Z}^{d+1},\,\,\,\,\,    {\rm g.c.d.} (q_*, a_{1,*},\dots,a_{d,*})= 1, \,\,\,\,\, q_*\ge 1,
$$
such that 
$
 (q_\nu, a_{1,\nu},\dots,a_{d,\nu}) = \Delta \cdot
 (q_*, a_{1,*},\dots,a_{d,*})$ and $
 (q_{\nu-1}, a_{1,\nu-1},\dots,a_{d,\nu-1}) = \Delta'\cdot
 (q_*, a_{1,*},\dots,a_{d,*})
 $,
 where
 $$
  \Delta = {\rm g.c.d.}  (q_\nu, a_{1,\nu},\dots,a_{d,\nu}) ,\,\,\,\,\,\,
   \Delta' = {\rm g.c.d.}  (q_{\nu-1}, a_{1,\nu-1},\dots,a_{d,\nu-1}) .
   $$
 In particular
 $$
 q_\nu = \Delta q_*,\,\,\,\,\, q_{\nu-1} =\Delta ' q_*
 $$
 and
 $$
 \left|
\begin{array}{cc}
q_{\nu-1} &A_{\nu-1}
\cr
q_{\nu} &A_{\nu}
\end{array}
\right| \equiv 0\pmod{q_*}.
 $$
 Now from (\ref{ppop}) we deduce
 $$
\frac{q_\nu}{\Delta} = q_*\le
|\,
\left|
\begin{array}{cc}
q_{\nu-1} &A_{\nu-1}
\cr
q_{\nu} &A_{\nu}
\end{array}
\right|\,|
\le 2 q_\nu |q_{\nu-1} f(\pmb{\xi}) - A_{\nu-1}| \le 2q_\nu \zeta_{\nu-1}\le 2q_\nu ^{1-\gamma}
$$
by (\ref{ww}). Thus we get
$$
q_\nu^\gamma \le 2\Delta.
$$
We apply Lemma \ref{2} to see that
$
\gamma \le \frac{s-1}{s}
,$
and hence $ \hat{\omega} (\Xi_f) \le \frac{s-1}{s} < H_{d,s}$.

   \medskip
   \noindent {\bf Case 1.2.}  For infinitely many $\nu$  {(\ref{dell}) holds}  with $\pmb{\alpha}_{\nu-1}\neq\pmb{\alpha}_\nu$.
   \smallskip

We proceed  similarly to   Case 1 from the proof of Theorem \ref{2a} by 
 applying Lemma \ref{l1}{\bf (i)}
with $ A=q_\nu$ for $\pmb{\alpha} = \pmb{\alpha}_\nu \neq\pmb{\beta}=\pmb{\alpha}_{\nu-1}$.
From (\ref{oo}),  similarly to (\ref{c1}), {for large enough $\nu$} we get  
$$
 \frac{1}{(DK)^{\frac{1}{s}}q_{\nu}^{\frac{s-1}{s} }q_{\nu-1}}\le
  \sqrt{d}  \max_{1\le k\le d}
 \left|
 \frac{a_{k,\nu}}{q_\nu}-
  \frac{a_{k,\nu-1}}{q_{\nu-1}}
 \right|
 \le
  \frac{2 \sqrt{d}  }{q_{\nu-1}q_\nu^\gamma}.
$$
Again
 $ q_\nu^\gamma = O\left( q_\nu^{\frac{s-1}{s}}\right)$ for infinitely many $\nu$, and
 $\hat{\omega} (\Xi_f)\le \frac{s-1}{s} <H_{d,s}$.

   \medskip

   \noindent {\bf Case 2.}  
$ f (\pmb{\alpha}_{\nu}) \neq\frac{A_\nu}{q_\nu}  $ for all $\nu$ large enough.
   \smallskip

This case is similar to   Case 2 from the proof of Theorem \ref{2a}.
Now the difference
$f (\pmb{\alpha}_\nu )-\frac{A_\nu}{q_\nu}$ is a nonzero rational number with denominator $Dq_{\nu}^s$. Therefore
 we have 
 $$\left| f (\pmb{\alpha}_\nu )-\frac{A_\nu}{q_\nu}\right| \ge\frac{1}{Dq_\nu^s}.
 $$
 Analogously to (\ref{c}) we now get  (\ref{5qqq}), {which} leads to 
$\omega (\Xi_f) \le s-1$.
{Then, applying Theorem~\ref{A} to the $(d+1)$-dimensional vector
  $\Xi_f$, we obtain}
$$
            \frac{s-1}{\hat{\omega} (\Xi_f) }\ge \frac{\omega (\Xi_f) }{\hat{\omega} (\Xi_f) }\ge G_{d+1}.
  $$
  This shows that the positive root of equation
  $$
\left(\frac{s-1}{x}\right)^{d} =
   \frac{x}{1-x} \left( \left(\frac{s-1}{x}\right)^{d-1} + \left(\frac{s-1}{x}\right)^{d-2} + \dots+ 
   \frac{s-1}{x}
   +1\right)
   $$
   gives {an} upper bound for $\hat{\omega}(\Xi_f)$. Theorem \ref{1a} is proved. 
    \qed

    \section{ Proof of Theorem 3a}\label{t6}    
    Let $$q_1 < q_2< \,\dots\, <q_\nu<q_{\nu+1}<\dots$$ be the   sequence of points where the function 
    $\Psi_{f, \pmb{\xi}} (T)$
    is not continuous. 
Without loss of generality we may suppose that this sequence  is infinite.
    We consider the corresponding best approximation vectors
    $$
    \pmb{\alpha}_\nu = \left( \frac{a_{1,\nu}}{q_\nu},\dots,\frac{a_{d,\nu}}{q_\nu}\right) \in \mathbb{Q}^{d},\,\,\,\,\,\,\,
    $$
    where $a_{j,\nu}$ realize the minima {in} the definition of the function $\Psi_{f, \pmb{\xi}} (T)$.
    They satisfy
    $
 f( \pmb{\alpha}_\nu) = 1.
$
    By definition of the function $\Psi_{\pmb{\xi}}(T)$ and numbers $q_\nu$ we see that
    $$
    \Psi_{\pmb{\xi}}(T) = \Psi_{\pmb{\xi}}(q_{\nu-1})\,\,\,\, \text{for}\,\,\,\, q_{\nu-1} \le T < q_\nu.
    $$
    Now, since
    $\pmb{\alpha}_\nu\neq\pmb{\alpha}_{\nu-1}$,
we may apply Lemma \ref{l1}{\bf (ii)}
with $ A=q_\nu$ {and} $ B = q_{\nu-1}$.
         {Indeed, $\pmb{\alpha} = \pmb{\alpha}_\nu$ and $\pmb{\beta} =\pmb{\alpha}_{\nu-1}$  satisfy \eqref{zzz}  for large enough $\nu$, and  from} (\ref{ooooo})  we have  
  $$
    \frac{1}{DKq_{\nu-1}^{s-1}q_\nu^{s-1}}\le
     \pmb{|} \pmb{\alpha}_{\nu-1}- \pmb{\alpha}_\nu\pmb{|}^s\le
    (
    \pmb{|} \pmb{\alpha}_{\nu-1} - \pmb{\xi}\pmb{|}
 +
 \pmb{|} \pmb{\alpha}_\nu - \pmb{\xi}\pmb{|})^s
 \le
 2^{s}
\pmb{|} \pmb{\alpha}_{\nu-1} - \pmb{\xi}\pmb{|}^s ,
    $$
    since $ \pmb{|} \pmb{\alpha}_\nu - \pmb{\xi}\pmb{|}\le \pmb{|} \pmb{\alpha}_{\nu-1} - \pmb{\xi}\pmb{|}$.
 {Thus}
         $$
    \frac{1}{2^{s}D{K}}
    \le  q_{\nu}^{s-1}q_{\nu-1}^{s-1}  \pmb{|} \pmb{\alpha}_{\nu-1} - \pmb{\xi}\pmb{|}^s
    \le q_\nu^{s-1} \Psi_{f,\pmb{\xi}} (q_{\nu-1})  .
    $$

    This means that
    $$
    \lim_{T\to q_\nu -0 }  T^{s-1}\cdot \Psi_{f,\pmb{\xi}} (T) =   q_\nu^{s-1}\Psi_{f,\pmb{\xi}} (q_{\nu-1})   \ge \frac{1}{2^sD{K}}\,.
    $$
     Theorem \ref{3a} is proved. \qed

\subsection*{Acknowledgements}

The authors were supported by NSF grant DMS-1600814 and RFBR grant No.\ 18-01-00886 respectively. This work was started during the second-named author's visit to Brandeis University, whose hospitality is gratefully acknowledged.  Thanks are also due to David Simmons, Barak Weiss and the anonymous referee for useful comments.

\end{document}